\documentclass[12pt,a4paper]{amsart}
\usepackage{amsmath}
\usepackage{stmaryrd}
\usepackage{textcomp}
\usepackage{amsmath}
\usepackage{cases}
\usepackage{amscd}
\usepackage{epsfig}
\usepackage{graphicx}

\usepackage{color}

\usepackage{verbatim}

\usepackage{amssymb}
\usepackage{amsfonts}
\usepackage{amsbsy}

\usepackage{amsthm}
\usepackage{amstext}
\usepackage{amsopn}

\textwidth 150mm \textheight 230mm
\topmargin 0cm \headheight 0.1cm \leftmargin 0cm
 \hoffset -1cm \oddsidemargin 1.0in
\evensidemargin 0mm \setlength{\topmargin}{1.0cm}
\setlength{\textheight}{22cm} \setlength{\oddsidemargin}{0.80cm}
\setlength{\evensidemargin}{0.80cm} \setlength{\textwidth}{16cm}
\setlength{\arraycolsep}{1.5pt}

\newcommand{\calR}{\mathcal{R}}

\newcommand{\R}{\mathbb{R}}

\newtheorem{thm}{Theorem}[section]

\newtheorem{cor}[thm]{Corollary}

\newtheorem{prop}[thm]{Proposition}
\newtheorem{rem}[thm]{Remark}

\linespread{1.2}

\begin{document}
\title[Half plane geometries: zero and unbounded negative curvature]{Half plane geometries}
\author{Ioannis D. Platis \& Li-Jie Sun}

\address{Department of Mathematics and Applied Mathematics,
University of Crete,
 Heraklion Crete 70013,
 Greece.}
\email{jplatis@math.uoc.gr}
\address {Department of applied science, Yamaguchi University
2-16-1 Tokiwadai, Ube 755-8611,
Japan.}
\email{ljsun@yamaguchi-u.ac.jp}

\thanks{Keywords: Half plane, Hyperbolic plane, Warped product.}
\thanks{2020 Mathematics Subject Classification: 53B20, 51M09.}

\begin{abstract}
In this paper, we endow the right half plane with  warped product metrics. The group of holomorphic isometries of all such metrics is isomorphic to the real additive group. Of our interest are two of those metrics: they have zero and unbounded negative sectional curvature, respectively, and  both of them are not complete. 
\end{abstract}
\maketitle

\section{Warped products on the half plane}
We consider the right half plane
$
\calR=\{(r,t)\;|\;r>0,\;t\in\R\}
$
with the metric
\begin{equation}\label{eq-gr}
g_h=ds^2=dr^2+\cfrac{1}{h^2(r)}dt^2,
\end{equation}
where $h(r)$ is a positive continuous function of $r.$ This is a warped product metric of the manifold
$\R_{+}\times _{1/h(r)}\R$, see for instance \cite{P}. We compute the features of this metric explicitly.
\subsection{K\"ahler manifold}  
An orthonormal frame for $g$ comprises the vector fields
$$
\partial_r,\quad T_h=h(r)\partial_t,
$$
which satisfy the bracket relation $$[\partial_r,T_h]=(h'/h)T_h.$$ An almost complex structure $J_h$ on the tangent space is defined by the relations
$$
J_h\partial_r=T_h,\quad J_hT_h=-\partial_r.
$$
Since $\calR$ is two-dimensional, $J_h$ is a complex structure and thus $(\calR,J_h)$ is a complex manifold. The Hermitian form $\omega_h$ defined by
$
\omega_h(\partial_r,T_h)=1,
$
that is, $$\omega_h=(1/h(r))\;dr\wedge dt$$ is closed and therefore we get our first theorem.
\begin{thm}
 $(\calR,g_h,J_h,\omega_h)$ is always K\"ahler.
 \end{thm}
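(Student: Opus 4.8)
The plan is to invoke the standard characterization that a Hermitian manifold is Kähler precisely when its fundamental two-form is closed, and then to verify the three ingredients — integrability of $J_h$, compatibility of $g_h$ with $J_h$, and closedness of $\omega_h$ — for the data at hand. The first of these has already been recorded: since $\calR$ is two-real-dimensional, the Nijenhuis tensor of any almost complex structure vanishes for degree reasons, so $J_h$ is integrable and $(\calR,J_h)$ is a genuine complex manifold.

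Next I would check that $g_h$ is Hermitian, i.e. that $g_h(J_hX,J_hY)=g_h(X,Y)$, which it suffices to test on the orthonormal frame $\{\partial_r,T_h\}$. Using $J_h\partial_r=T_h$ and $J_hT_h=-\partial_r$ together with orthonormality, one reads off $g_h(J_h\partial_r,J_h\partial_r)=g_h(T_h,T_h)=1=g_h(\partial_r,\partial_r)$, and similarly for the mixed and remaining entries, so compatibility holds. I would also confirm that the form $\omega_h$ normalized by $\omega_h(\partial_r,T_h)=1$ is exactly the fundamental form $g_h(J_h\cdot,\cdot)$ and that, in the coordinate coframe, it equals $(1/h(r))\,dr\wedge dt$: substituting $T_h=h\partial_t$ into $(f\,dr\wedge dt)(\partial_r,T_h)=fh$ forces $f=1/h$.

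The decisive and in fact effortless step is the closedness of $\omega_h$. Because $\calR$ has real dimension two, $\omega_h$ is a top-degree form, so $d\omega_h$ is a three-form on a two-manifold and hence identically zero; equivalently, a direct computation gives
$$
d\omega_h=d\!\left(\frac{1}{h(r)}\right)\wedge dr\wedge dt=-\frac{h'(r)}{h^2(r)}\,dr\wedge dr\wedge dt=0,
$$
since $dr\wedge dr=0$. Thus $d\omega_h=0$ for every admissible $h$, and the characterization yields the theorem. I do not anticipate any genuine obstacle here: the content of the statement is the familiar phenomenon that every Hermitian metric on a Riemann surface is automatically Kähler, the particular choice of $h$ being immaterial to the Kähler condition itself. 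The only point requiring care is to make explicit which definition of ``Kähler'' is in force, so that the reduction to $d\omega_h=0$ is justified rather than merely assumed.
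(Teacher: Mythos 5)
Your proposal is correct and follows essentially the same route as the paper: integrability of $J_h$ is automatic in real dimension two, $\omega_h=(1/h(r))\,dr\wedge dt$ is the fundamental form, and it is closed as a top-degree form, which is exactly the paper's (very brief) justification. You merely spell out the Hermitian compatibility check more explicitly than the paper does.
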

  This manifold  is a model for a half-plane geometry of curvature depending on the function $h$ and its derivatives. In fact, if $\nabla^h$ is the Riemannian connection, Koszul's formula
\begin{equation}\label{eq-Koszul}
g_h(\nabla^h_VU,W)=-(1/2)\left(g_h([U,W],V)+g_h([V,W],U)+g_h([U,V],W)\right),
\end{equation}
gives
\begin{eqnarray*}
&&
\nabla_{\partial_r}\partial_r=0,\quad \nabla^h_{T_h}\partial_r=-(h'/h)T_h,\\
&&
\nabla^h_{\partial_r}T_h=0,\quad \nabla_{T_h}T_h=(h'/h)\partial_r.
\end{eqnarray*}
The Riemannian curvature tensor $R^h$ is given by
\begin{equation}\label{eq-curvten}
R^h(U,V)W=\nabla^h_{V}\nabla^h_{U}W-\nabla^h_{U}\nabla^h_{V}W
+\nabla^h_{[U,V]}W
\end{equation}
It follows that $$R^h(\partial_r,T_h)\partial_r=\cfrac{h''h-2h'^2}{h^2}T_h$$ Therefore, the sectional curvature $K^h(\partial_r,T_h)$ is given by the formula
\begin{equation}\label{eq-seccurv}
K^h(\partial_r,T_h)=\cfrac{h''(r)h(r)-2(h'(r))^2}{h^2(r)}=H'(r)-H^2(r),\quad H(r)=(\log h(r))'.
\end{equation}
It follows from Equation (\ref{eq-seccurv}) that given a function $f=f(r)$, $r>0$, one may define a half-plane geometry with metric given by Equation (\ref{eq-gr}) and curvature $f(r)$, after solving the Riccati equation:
\begin{equation}\label{eq-riccati}
H'(r)-H^2(r)=f(r).
\end{equation}
In this paper we study two simple cases: a) $f\equiv 0$ and b) $f=-2/r^2$. In the first case, Equation (\ref{eq-riccati}) gives $h(r)=a_0/(a_1-r),\,a_0, a_1\in\R,$ and in the second case we have that $h(r)=c_0r/(c_1+c_2r^3),\,c_0, c_1, c_2\in\R.$ To simplify things, we choose $h_1(r)=1/r$ in the first case and $h_2(r)=r$ in the second case.   
 The following proposition now holds.
\begin{prop}
Let $ds_1^2=dr^2+r^2dt^2$ and $ds_2^2=dr^2+(1/r^2)dt^2.$ Then the holomorphic sectional curvature of the half plane with respect to the warped products $ds_1^2,$ $ds_2^2$ is $0$ and  $-2/r^2$, respectively.
\end{prop}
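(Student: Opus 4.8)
The plan is to reduce the statement to a direct application of the sectional-curvature formula (\ref{eq-seccurv}). First I would observe that since $\calR$ has real dimension two, its tangent space at any point is itself the unique $J_h$-invariant $2$-plane; the orthonormal pair $\{\partial_r,T_h\}$ spans it and satisfies $J_h\partial_r=T_h$. Consequently the holomorphic sectional curvature coincides with the ordinary sectional curvature $K^h(\partial_r,T_h)$ of this single plane, so the problem is nothing more than evaluating the right-hand side of (\ref{eq-seccurv}) for the two choices of $h$.

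Next I would read off the warping function in each case. Writing $g_h=dr^2+h^{-2}(r)\,dt^2$, matching $ds_1^2=dr^2+r^2\,dt^2$ forces $h_1(r)=1/r$, while matching $ds_2^2=dr^2+r^{-2}\,dt^2$ forces $h_2(r)=r$, in agreement with the choices $h_1,h_2$ made just above the proposition. It then remains to compute $H(r)=(\log h(r))'$ and substitute into $K^h=H'(r)-H^2(r)$. For $h_1$ one has $H_1(r)=-1/r$, hence $H_1'(r)=1/r^2=H_1^2(r)$ and $K^{h_1}=0$. For $h_2$ one has $H_2(r)=1/r$, hence $H_2'(r)=-1/r^2$ while $H_2^2(r)=1/r^2$, giving $K^{h_2}=-1/r^2-1/r^2=-2/r^2$. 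This yields the two asserted values.

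Since the curvature formula (\ref{eq-seccurv}) has already been established, I expect no genuine obstacle here: the content of the proposition is a one-line substitution in each case. The only point that I would state explicitly rather than skip is the identification of the holomorphic sectional curvature with the Gaussian curvature $K^h(\partial_r,T_h)$, which is exactly the feature that makes the two-dimensional K\"ahler setting so rigid; everything else is routine differentiation of $\log r$.
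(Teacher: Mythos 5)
Your proposal is correct and matches the paper's (implicit) argument exactly: the paper derives the proposition by substituting $h_1(r)=1/r$ and $h_2(r)=r$ into the curvature formula (\ref{eq-seccurv}), which is precisely your computation. Your explicit remark that in real dimension two the holomorphic sectional curvature coincides with $K^h(\partial_r,T_h)$ is a welcome clarification, but the route is the same.
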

The flat K\"ahler case is of special interest, because of two reasons. First, due to the extraordinary nature of its geodesics and its non completeness and secondly because it can be generalised to hyperbolic spaces of all kinds and aof any dimension into a K\"ahler structure of non positive, unbounded holomorphic sectional curvature; this is the topic of discussion of another paper.  In the case of unbounded curvature here, it can be generalised to real hyperbolic spaces of any dimension. 
\section{Holomorphic isometries}\label{Sec-Hol}
It is clear that orientation preserving M\"obius transformations which stabilise $\infty$, that is, those of the form
$$
G(\zeta)=k\zeta+il,\quad\zeta=r+it,\;k>0,\;l\in\R,
$$ 
are geodesic preserving for the metric $g_h$. The group comprising all these transformations is isomorphic to the affine group ${\rm Aff}^+(\R)=\R_{+}\times\R$ and it acts on $\calR$ from the left as follows
$$
\left((a,b),\;(r,t)\right)\mapsto(ar,\;at+b).
$$
The action is transitive (all points of $\calR$ can be mapped to $(1,0)$) and free (the stabiliser of each point comprises only of $(1,0)$). 

When $h(r)=r$ or $1/r,$ such a transformation is also holomorphic with respect to the complex structure $J_h$ of $\calR$ if and only if $k=1$. To see this, write
$
G(r,t)=(kr,\;kt+l).
$
Then
$$
DG=\left(\begin{matrix}
k&0\\
0&k\end{matrix}\right),
$$
and
$$
DGJ_h=\left(\begin{matrix}
k&0\\
0&k\end{matrix}\right)\left(\begin{matrix}
0&h(r)\\
-1/h(r)&0\end{matrix}\right)=\left(\begin{matrix}
0&h(kr)\\
-1/h(kr)&0\end{matrix}\right)\left(\begin{matrix}
k&0\\
0&k\end{matrix}\right)=J_hDG
$$
if and only if $k=1.$
\begin{thm}
When $h(r)=r$ or $1/r,$ the group of holomorphic isometries ${\rm Isom}(\calR,J_h,g_h)$ comprises only mappings of the form
$$
h(\zeta)=\zeta+il, \quad \zeta=r+it,\; l\in\R,
$$
hence it is isomorphic to the additive group $\R$. 
\end{thm}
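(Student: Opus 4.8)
The plan is to observe that, on the oriented Riemannian surface $(\calR,g_h)$, the complex structure $J_h$ is nothing but the rotation by $+\pi/2$ determined by $g_h$ together with the orientation fixed by $\omega_h$: indeed $J_h$ sends the positively oriented orthonormal frame $\partial_r,\,T_h$ to $T_h,\,-\partial_r$. A linear isometry of a two-dimensional inner product space commutes with this quarter-turn if and only if it is orientation preserving, while a reflection anticommutes with it. Hence a diffeomorphism $\phi$ is a holomorphic isometry (that is, $D\phi$ preserves both $g_h$ and $J_h$) precisely when it is an orientation-preserving isometry. Thus it suffices to compute ${\rm Isom}^{+}(\calR,g_h)$ for $h=r$ and $h=1/r$ and to check that every such map is a translation $\zeta\mapsto\zeta+il$; by Myers--Steenrod any isometry is smooth, so we may freely differentiate.

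For $h=r$ the metric is $ds_2^2=dr^2+(1/r^2)\,dt^2$ and, by~(\ref{eq-seccurv}), the sectional curvature equals $-2/r^2$, a strictly monotone function of $r$. Any isometry preserves curvature, hence preserves the level sets of $r$; since $r>0$ this forces $r\circ\phi=r$, i.e. $\phi(r,t)=(r,\tau(r,t))$. Substituting into $\phi^{*}g_h=g_h$ and comparing the coefficients of $dr^2$, $dr\,dt$ and $dt^2$ gives $\tau_r=0$ and $\tau_t^2=1$; the orientation-preserving condition selects $\tau_t=+1$, whence $\tau=t+l$ and $\phi(r,t)=(r,t+l)$. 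This settles the case of unbounded negative curvature with essentially no obstacle.

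The flat case $h=1/r$, with $ds_1^2=dr^2+r^2\,dt^2$, is the real difficulty, since the vanishing curvature furnishes no pointwise invariant of $r$. Here I would use the developing map $D(r,t)=(r\cos t,\,r\sin t)$, an orientation-preserving local isometry realising $(\calR,ds_1^2)$ as the simply connected universal cover of the punctured plane $\R^2\setminus\{0\}$ with its Euclidean metric. For any isometry $\phi$ the composite $D\circ\phi$ is a second developing map of the same flat structure, so by uniqueness of developing maps it equals $A\circ D$ for a Euclidean rigid motion $A$. Because $D$ and $D\circ\phi$ have image exactly $\R^2\setminus\{0\}$, the motion $A$ must fix the origin and so is linear orthogonal; orientation-preservation of $\phi$ (and of $D$) forces $A\in\mathrm{SO}(2)$, a rotation by some angle $\alpha$. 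Comparing $D\circ\phi=(r\cos(t+\alpha),\,r\sin(t+\alpha))$ with $D(\rho,\theta)=(\rho\cos\theta,\,\rho\sin\theta)$ yields $\rho=r$ and $\theta\equiv t+\alpha\pmod{2\pi}$; continuity of $\phi$ on the connected domain $\calR$ then upgrades this to $\theta=t+\alpha+2\pi k$ for a single integer $k$, so $\phi(r,t)=(r,t+l)$ with $l=\alpha+2\pi k\in\R$.

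In both cases ${\rm Isom}^{+}(\calR,g_h)=\{\zeta\mapsto\zeta+il:\,l\in\R\}\cong(\R,+)$, which is the assertion. The one genuinely delicate point is the flat case: one must argue globally via the developing map and its uniqueness, and then fix a single branch of the angle using connectedness, rather than merely solving the first-order isometry equations locally; the step that pins $A$ down to a rotation about the origin relies on the image of $D$ omitting exactly the point $0$.
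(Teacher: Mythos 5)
Your proof is correct, but it follows a genuinely different route from the paper's. The paper works directly from the definition: it writes down the Cauchy--Riemann equations for $J_h$-holomorphy of $G=(u,v)$, expands $G^*g_h$, kills the cross term to get $v=v(t)$ and $u=u(r)$, and then solves the resulting ODEs, using the residual condition $h(u)=h(r)$ to force $u=r$ when $h=r$ or $1/r$; the same computation handles both metrics uniformly. You instead begin with the observation that on an oriented surface $J_h$ is the metric quarter-turn, so holomorphic isometries are exactly orientation-preserving isometries, and then classify ${\rm Isom}^+(\calR,g_h)$ by geometric invariants: for $ds_2^2$ the curvature $-2/r^2$ is injective in $r$, pinning $r\circ\phi=r$ and reducing everything to a one-line pullback computation; for the flat $ds_1^2$ you identify $(\calR,ds_1^2)$ with the universal cover of the punctured Euclidean plane via $D(r,t)=(r\cos t, r\sin t)$ and invoke uniqueness of developing maps, with the puncture forcing the rigid motion $A$ to be a rotation about the origin and connectedness fixing a single branch of the angle. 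Both arguments are sound. Yours is more conceptual --- it explains structurally why only vertical translations survive (no curvature-level-set symmetry in $r$ for $ds_2^2$; only the deck-transformation-compatible rotations for $ds_1^2$) --- at the price of importing Myers--Steenrod and the developing-map machinery; the paper's is more elementary and self-contained, treating both metrics by one computation, though it leans on the explicit coordinate form of the Cauchy--Riemann equations. One point worth making explicit in your flat-case argument is that you use surjectivity of $\phi$ (i.e.\ that elements of the isometry group are global bijections) to conclude that $D\circ\phi$ has image exactly $\R^2\setminus\{0\}$, which is what forces $A(0)=0$.
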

\begin{proof}
Let $G(r,t)=(u(r,t),\;v(r,t))$ be a holomorphic isometry. The Cauchy-Riemann equations are
\begin{equation}
u_r h(r)=v_t h(u),\quad u_t=-h(r)h(u)v_r.
\end{equation}
On the other hand,
\begin{eqnarray*}
G^*g_h&=&du^2+(1/h^2(u))dv^2\\
&=&(u_rdr+u_tdt)^2+(1/h^2(u))(v_rdr+v_tdt)^2\\
&=&(u_r^2+(1/h^2(u))v_r^2)\;dr^2+(u_t^2+(1/h^2(u)v_t^2)\;dt^2+2(u_ru_t+(1/h^2(u))v_rv_t)\;drdt.
\end{eqnarray*}
By the Cauchy-Riemann equations, the coefficient of the third term is
$$
2v_rv_t(1/h^2(u)-h^2(u)).
$$
 From the isometric condition $h^*g_h=g_h$, this must vanish therefore we have three possibilities: a)$v_r=0$, b) $v_t=0$ and c) $h\equiv 1$. Only a) can occur since the first contradicts the Cauchy-Riemann equations and the third tells us that the metric is Euclidean; hence $v=v(t)$.
By using once more the Cauchy-Riemann equations we obtain $u=u(r)$ and then the isometric conditions become
\begin{eqnarray*}
&&
(du/dr)^2=1,\quad (1/h^2(u))(dv/dt)^2=1/h^2(r).
\end{eqnarray*}
The first equation may only imply $du/dr=1$, hence $u=r+c$, with $c$ being a constant. Considering the second equation, one can get that $h(r)=h(u).$ When $h(r)=r$ or $1/r,$ one can easily get that $c=0$  and $v(t)=t+l$, where $l$ is a constant.
\end{proof}

\section{Geodesics} 

Let $\gamma(s)=(r(s),t(s))$ be a smooth curve defined in an interval $I$ containing zero, and suppose that $\gamma(0)=(r_0,t_0)$. The tangent vector along $\gamma$ is
$$
\dot \gamma=\dot r\partial _r+(1/h)\dot tT_h.
$$
We set $f(s)=\dot r(s)$, $g(s)=(1/h)\dot t(s)$ and we may suppose that $f^2(s)+g^2(s)=1$. Then the covariant derivative of $\dot\gamma$ is
$$
\frac{D\dot\gamma}{ds}=(\dot f+g^2H)\partial_r+(\dot g-fgH)T_h,\quad H=(\log h)'.
$$
Hence the geodesic equations are
$$
\dot f=-g^2H,\quad \dot g=fgH.
$$
It follows from $f^2+g^2=1$ that $$\dot f=(f^2-1)H.$$
If $f^2=1$ then we obtain 
$$
r(s)=\pm s+r_0,\quad t(s)=t_0,
$$
which is a straight ray passing from $(r_0,t_0)$. If $f^2-1\neq 0$, then 
$$
f^2=\dot r^2=1-b^2\exp\left(2\int H(r)dr\right), \quad b>0.
$$
This has to be positive:
$$
b\le\exp\left(-\int H(r)dr\right).
$$
Then we obtain $r(s)$ by solving the d.e.
\begin{equation}\label{eq-r}
\dot r(s)=\pm\sqrt{1-b^2\exp\left(2\int H(r)dr\right)},\quad r(0)=r_0
\end{equation}
whereas $t(s)$ is given by the solution of
\begin{equation}\label{eq-t}
\dot t(s)=\pm\frac{b\exp\left(\int H(r)dr\right)}{r(s)}, \quad t(0)=t_0.
\end{equation}
\subsection{Geodesics of $(\calR, ds_1^2)$}\label{Sec-Geo1}
In this case, $H(r)=-1/r$ and the solution to Eq. (\ref{eq-r}) is
\begin{equation}\label{equa-s}
r(s)=\sqrt{(s+a)^2+b}, 
\end{equation}
where
\begin{equation}\label{eq-r_0}
\sqrt{a^2+b}=r_0,\;b> 0.
\end{equation}
  
On the other hand, the solution to Eq. (\ref{eq-t}) is
\begin{equation}\label{equa-t}
t(s)=\pm\arctan\cfrac{s+a}{\sqrt{b}}+d,\quad d\in\R,
\end{equation}
where 
\begin{equation}\label{eq-t_0}
\pm\arctan\cfrac{a}{\sqrt{b}}+d=t_0.
\end{equation}
By setting $b=r_0^2-a^2$, $a\in(-r_0,r_0),$ we obtain the following one-parameter family of geodesics passing from $p_0$ and are not parallel to the $r$-axis:
\begin{equation}\label{eq-geo-par}
\gamma_a(s)=\left(\sqrt{s^2+2as+r_0^2},\;\pm\arctan\frac{s\sqrt{r_0^2-a^2}}{r_0^2+as}+t_0\right),\quad a\in(-r_0,r_0).
\end{equation}

\begin{thm}\label{geo-2points}
Let $p_0=(r_0,t_0)$ and $p_1=(r_1,t_1)$ be two  arbitrary distinct points  of $\calR$. The following hold:
\begin{enumerate}
\item[(i)] when $t_1=t_0,$ then $p_0$ and $p_1$ are joined by the geodesic $\gamma$ which is the horizontal line $t=t_0$;
\item[(ii)] when $t_1\neq t_0,$ then there exist geodesics $\gamma_a(s)$ with $p_0=\gamma_a(0)$ passing through $p_1=\gamma_a(s)$ if and only if $|t_1-t_0|<\pi.$ In this case
\begin{align*}
s=&\pm\sqrt{r_1^2+r_0^2\pm2r_0^2r_1^2 \cos^2(t_1-t_0)},\\
a=&\cfrac{-r_0^2\mp r_0r_1\cos(t_1-t_0)}{\pm\sqrt{r_1^2+r_0^2\pm2r_0^2r_1^2 \cos^2(t_1-t_0)}}.
\end{align*}
\end{enumerate}
\end{thm}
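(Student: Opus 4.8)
The plan is to recognise $ds_1^2=dr^2+r^2\,dt^2$ as the flat Euclidean metric in polar-type coordinates: the map $\Phi(r,t)=(r\cos t,\,r\sin t)$ pulls $dx^2+dy^2$ back to $ds_1^2$, so it is a local isometry presenting $\calR$ as the universal cover of $\R^2\setminus\{0\}$, and geodesics of $ds_1^2$ are necessarily carried to straight lines. I would first make this explicit on the family (\ref{eq-geo-par}): using $b=r_0^2-a^2$ together with the identity $(r_0^2+as)^2+s^2b=r_0^2\,r(s)^2$, one finds (after the isometry $(r,t)\mapsto(r,t+c)$ normalises $t_0=0$) that
\begin{equation*}
\Phi(\gamma_a(s))=\left(r_0+\tfrac{a}{r_0}\,s,\ \pm\tfrac{\sqrt{r_0^2-a^2}}{r_0}\,s\right),
\end{equation*}
a unit-speed line through $P_0:=\Phi(p_0)$ that never meets the origin. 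This reduces everything to planar Euclidean geometry. Part (i) is then immediate: if $t_1=t_0$ the points lie on the ray $t=t_0$, which is exactly the radial geodesic $r(s)=\pm s+r_0,\ t\equiv t_0$ already isolated from the case $f^2=1$.

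For part (ii) write $P_1:=\Phi(p_1)$ and $\Delta:=t_1-t_0$. Along any line missing the origin the polar angle is strictly monotone, since $\dot\theta=(x\dot y-y\dot x)/(x^2+y^2)$ has numerator $P_0\times v$ constant in $s$, and its total variation along the whole line equals $\pi$; hence the signed angle swept between two of its points lies in $(-\pi,\pi)$. Because the lift to $\calR$ must hit the \emph{prescribed} value $t_1$ rather than $t_1$ modulo $2\pi$, a connecting geodesic exists exactly when this swept angle can equal $\Delta$, that is, exactly when $0<|\Delta|<\pi$; the borderline $|\Delta|=\pi$ puts $P_0,P_1$ antipodal about the puncture, where the segment would have to cross $r=0$. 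This is the step I expect to be the crux: the subtlety is that the bare $\arctan$ in (\ref{eq-geo-par}) only records the angle on the branch $r_0^2+as>0$, with range $(-\pi/2,\pi/2)$, whereas the genuine continuous polar angle sweeps an interval of length $\pi$, and it is the Euclidean reformulation that makes the sharp bound $|t_1-t_0|<\pi$ transparent.

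Finally, granting $|\Delta|<\pi$, the parameter $s$ is signed arclength, equal under $\Phi$ to the Euclidean distance $|P_1-P_0|$, so the law of cosines in the triangle with vertices $0,P_0,P_1$ gives
\begin{equation*}
s=\pm\sqrt{r_0^2+r_1^2-2r_0r_1\cos(t_1-t_0)}.
\end{equation*}
Feeding this into the radial relation $r_1^2=s^2+2as+r_0^2$ coming from (\ref{equa-s}) and solving for $a$ yields
\begin{equation*}
a=\frac{r_1^2-r_0^2-s^2}{2s}=\frac{r_0r_1\cos(t_1-t_0)-r_0^2}{\pm\sqrt{r_0^2+r_1^2-2r_0r_1\cos(t_1-t_0)}},
\end{equation*}
with the constraint $a\in(-r_0,r_0)$ automatic. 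The remaining routine point is to confirm that each admissible sign corresponds to a genuine $\gamma_a$ reaching $p_1$, which follows from uniqueness of the Euclidean line joining $P_0$ to $P_1$. I would flag that this derivation produces the law-of-cosines expression above, to be reconciled with the form recorded in the statement.
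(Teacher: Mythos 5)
Your proposal is correct, and it takes a genuinely different route from the paper's. The paper works directly with the family (\ref{eq-geo-par}): it imposes $\gamma_a(s)=(r_1,t_1)$, eliminates $a$ between the radial relation (\ref{eq-geosol1}) and the squared tangent of (\ref{eq-geosol2}), arrives at (\ref{geo-s2}), and reads off $s^2=r_0^2+r_1^2\pm 2r_0r_1\cos(t_1-t_0)$; the bound $|t_1-t_0|<\pi$ is asserted in one line from the form of (\ref{eq-geosol2}). You instead uniformise: $\Phi(r,t)=(r\cos t, r\sin t)$ exhibits $(\calR,ds_1^2)$ as the universal cover of the punctured Euclidean plane, geodesics become straight lines missing the origin, and existence reduces to the fact that the continuous polar angle along such a line is strictly monotone with total variation $\pi$. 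This buys two things the paper's computation does not quite deliver. First, it turns the necessity and sufficiency of $|t_1-t_0|<\pi$ into an honest argument: the bare $\arctan$ in (\ref{eq-geo-par}) has range $(-\pi/2,\pi/2)$ and is discontinuous where $r_0^2+as$ changes sign, so the paper's claim that $|t_1-t_0|\ge\pi$ is impossible ``from (\ref{eq-geosol2})'' is not literally available from that formula; your monotone-angle argument is the correct replacement, and you rightly identify this as the crux. Second, the law of cosines resolves the sign ambiguity created by squaring $\tan(t_1-t_0)$: the answer is definitely $s^2=r_0^2+r_1^2-2r_0r_1\cos(t_1-t_0)$, the $+$ root being spurious. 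Your flag about the final formula is also well founded: the displayed $2r_0^2r_1^2\cos^2(t_1-t_0)$ in the statement is inconsistent with the paper's own intermediate conclusion drawn from (\ref{geo-s2}), which gives $2r_0r_1\cos(t_1-t_0)$ exactly as you derive, so the discrepancy is a typo in the statement rather than a defect of your argument.
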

\begin{proof}
 It is sufficient to find the value of $s$ and $a\in(-r_0,r_0)$ such that $\gamma_a(s)=(r_1,t_1)$ when $t_1\neq t_0.$ In particular, we have the equations
\begin{equation}\label{eq-geosol1}
s^2+2as+r_0^2-r_1^2=0
\end{equation}
and 
\begin{equation}\label{eq-geosol2}
t_1-t_0=\pm\arctan\frac{s\sqrt{r_0^2-a^2}}{r_0^2+as}.
\end{equation}
We observe that $s\neq 0$; otherwise we get from (\ref{eq-geosol1}) that $r_0=r_1$ and from (\ref{eq-geosol2}) that $t_1=t_0$. 
It is now clear from (\ref{eq-geosol2}) that if $|t_1-t_0|\ge \pi$ then it has no solution. 

Let $\tau=\tan(t_1-t_0).$  It follows form (\ref{eq-geosol2}) that
$$
\tau^2=\cfrac{s^2(r_0^2-a^2)}{(r_0^2+as)^2}.
$$
Moreover, one can get that
$$(r_0+ as)^2(1+\tau^2)=r_0^2(r_0^2+2as+s^2),$$
which implies
\begin{equation}\label{geo-s2}
(r_1^2+r_0^2-s^2)^2(1+\tau^2)=4{r_0^2r_1^2},
\end{equation}
because $r_0^2+2as+s^2=r_1^2$ and ${as}=\cfrac{r_1^2-r_0^2-s^2}{2}$ from (\ref{eq-geosol1}).
\bigskip
Therefore, we have $s^2=r_1^2+r_0^2\pm 2r_0r_1\cos(t_1-t_0)$ by (\ref{geo-s2}), i.e.,
$$s=\pm\sqrt{r_1^2+r_0^2\pm2r_0^2r_1^2 \cos^2(t_1-t_0)}.$$
Considering $a=\cfrac{r_1^2-r_0^2-s^2}{2s},$ one can get that
$$a=\cfrac{-r_0^2\mp r_0r_1\cos(t_1-t_0)}{\pm\sqrt{r_1^2+r_0^2\pm2r_0^2r_1^2 \cos^2(t_1-t_0)}}.$$
By direct calculation, one can check that $a\in (-r_0, r_0).$
\end{proof}
\begin{rem}
Let $\gamma$ be a geodesic joining $p_1=(r_1,t_1)$ and $p_2=(r_2,t_2)\, (|t_1-t_2|<\pi).$
Denote by $d$ the Riemannian distance on $\calR$ induced by $g$:
$$
d(p_1,p_2)=\ell(\gamma)=\int_I\|\dot\gamma(s)\|\;ds.
$$
If $t_1=t_2=t_0,$ then $\gamma(s)=(r_1+(r_2-r_1)s,t_0)$, $\|\dot\gamma(s)\|=|r_2-r_1|$ and
$d(p_1,p_2)=|r_2-r_1|.$ 
If $t_1\neq t_2,$ let $\gamma$ be a geodesic passing from $p_1$ and $p_2.$  Then one can always find $\alpha$ such that $\gamma$ parametrised by $t$ joining $p_1, p_2$ has the following form:
$$
\gamma(t)=\left(r_1\left|\cfrac{\cos (t_1+\alpha)}{\cos (t+\alpha)}\right|, t\right),\quad 
\dot\gamma(t)=\left(r_1\left|\cfrac{\cos (t_1+\alpha)\sin (t+\alpha)}{\cos^2 (t+\alpha)}\right|, 1\right).
$$
We see that
\begin{eqnarray*}
d(p_1,p_2)&=&\left|\int_{t_1}^{t_2}r_1\cfrac{\cos (t_1+\alpha)}{\cos^2 (t+\alpha)}  \;dr\right|\\
&=& \left|r_1\cos (t_1+\alpha) \big[\tan (t+\alpha)\big]_{t_1}^{t_2}\right|\\
&=& |r_2\sin (t_2+\alpha)- r_1\sin (t_1+\alpha)|.
\end{eqnarray*}
\end{rem}
From the above proposition, we know there is not always a geodesic joining any two given points; on the other hand, the inextendible horizontal ray $(r, 0)$ $(r\in (0, 1))$ has length 1 since $\|\frac{\partial}{\partial r}\|=1$. Therefore, by Hopf-Rinow Theorem we have:
\begin{cor}
$(\calR, ds_1^2)$ is not geodesic complete. 
\end{cor}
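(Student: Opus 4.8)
The plan is to apply the Hopf--Rinow theorem in its contrapositive form: since geodesic completeness forces every maximal geodesic to be defined on all of $\R$ (equivalently, forces the metric space $(\calR,d)$ to be Cauchy complete), it suffices to exhibit a single inextendible geodesic whose maximal parameter interval is bounded, or equivalently a Cauchy sequence with no limit in $\calR$.

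First I would isolate the horizontal geodesics. From the geodesic analysis, the case $f^2=1$ yields exactly the straight rays $r(s)=\pm s+r_0$, $t(s)=t_0$. Along such a ray $dt=0$, so with respect to $ds_1^2=dr^2+r^2dt^2$ its speed is $\|\partial_r\|=1$; it is therefore a unit-speed geodesic. Taking the ray that decreases $r$, say $\gamma(s)=(r_0-s,\,t_0)$, the constraint $r>0$ confines the parameter to $s<r_0$, and as $s\to r_0^-$ the point $(r_0-s,t_0)$ tends to $(0,t_0)$, which lies on the boundary $r=0$ and hence is not a point of $\calR$.

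The key step is then to observe that this geodesic is \emph{inextendible}: its maximal domain is the half-line $(-\infty,r_0)$ rather than all of $\R$, because no extension past $r=0$ is possible within $\calR$. By Hopf--Rinow, a geodesically complete manifold has every maximal geodesic defined on all of $\R$; since $\gamma$ violates this, $(\calR,ds_1^2)$ cannot be geodesically complete. Equivalently---matching the remark preceding the statement---the ray restricted to $r\in(0,1)$ has finite length $\int_0^1 1\,dr=1$ yet cannot be continued, which is exactly the obstruction Hopf--Rinow rules out for complete metrics.

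I expect no serious obstacle here; this is a routine consequence. The only points requiring care are to state precisely which version of Hopf--Rinow is invoked and to justify that $r=0$ is a genuine boundary so that the maximal interval really is finite on one side. As an alternative packaging of the same idea, one could instead take the sequence $p_n=(1/n,0)$, note that $d(p_n,p_m)=|1/n-1/m|$ along the minimizing horizontal segment, conclude it is Cauchy with no limit in $\calR$, and deduce non-completeness of $(\calR,d)$; a third route would invoke the failure, recorded in Theorem \ref{geo-2points}, of any geodesic to join points with $|t_1-t_0|\ge\pi$, contradicting the minimizing-geodesic conclusion of Hopf--Rinow once one checks that the horizontal lines together with the family $\gamma_a$ exhaust all geodesics.
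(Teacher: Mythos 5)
Your proposal is correct and follows essentially the same route as the paper: both exhibit the horizontal ray running into the boundary $r=0$ as a unit-speed geodesic that is inextendible after finite parameter (equivalently, of finite length on $r\in(0,1)$), and conclude non-completeness via Hopf--Rinow. Your additional packagings (the Cauchy sequence $p_n=(1/n,0)$ and the failure of geodesic connectedness from Theorem \ref{geo-2points}) are valid variants of the same observation, the latter being the other half of the paper's own remark preceding the corollary.
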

\subsection{Geodesics of $(\calR, ds_2^2)$} 
In this cased we have $H(r)=1/r$.  The solution to Eq. (\ref{eq-t}) is now
\begin{equation}\label{eq-t2}
r(s)=\cfrac{1}{b}\sin(\pm bs+\arcsin(br_0)),
\end{equation}
where $0<b\leq1/r_0,$ and 
\begin{equation}\label{eq-t2}
t(s)=\pm\left(\cfrac{s}{2}-\cfrac{1}{4}\sin(\pm2bs+2\arcsin(br_0))\pm t_0+\cfrac{1}{4}\sin(2\arcsin(br_0))\right).
\end{equation}

We consider two distinct points $(r_0, t_0)$ and $(r_1, t_1),$ where $r_1=r_0, \, t_1\neq t_0.$ We claim that there does not exist a geodesic joining these two points when $\pi r_0>|t_1-t_0|.$ Indeed, if there exist a geodesic joining the two points, then we have that
$r_0=r_1=(1/b)\sin(\pm bs+\arcsin(br_0))$ which gives $bs=2k\pi,\,k\in\mathbb{Z}\setminus\{0\}.$
On the other hand, $$t_1-t_0=\pm\left(\cfrac{s}{2}-\cfrac{1}{4}\sin(\pm2bs+2\arcsin(br_0))+\cfrac{1}{4}\sin(2\arcsin(br_0))\right)=\pm \frac{s}{2},$$ which means $s=\pm 2(t_1-t_0).$ It follows from $b=k\pi/(t_1-t_0)$  that 
$$\cfrac{\pi}{|t_1-t_0|}\leq\left|\cfrac{k\pi}{t_1-t_0}\right|=|b|\leq
\cfrac{1}{r_0},$$
i.e. $\pi r_0\leq|t_1-t_0|.$ Consequently, there does not necessarily exist a geodesic joining any given two points. 
\begin{cor}
$(\calR, ds_2^2)$ is not geodesic complete. 
\end{cor}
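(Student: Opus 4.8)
The plan is to follow the flat case almost verbatim: distil from the paragraph above a pair of points that no geodesic joins, and then invoke the Hopf--Rinow theorem. First I would restate the preceding computation in contrapositive form. Fix $p_0=(r_0,t_0)$ and $p_1=(r_1,t_1)$ with $r_1=r_0$ and $0<|t_1-t_0|<\pi r_0$. As shown just above, a curved geodesic (one with $f^2\neq 1$) that returns to the level $r=r_0$ must have $bs=2k\pi$ for some $k\in\mathbb{Z}\setminus\{0\}$, and the $t$-equation then gives $b=k\pi/(t_1-t_0)$; together with the admissibility bound $0<b\le 1/r_0$ this forces $\pi r_0\le|t_1-t_0|$, contradicting the choice of $p_0,p_1$. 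The straight geodesics (those with $f^2=1$) keep $t$ constant and so cannot join points with $t_1\neq t_0$. Hence no geodesic of $(\calR,ds_2^2)$ joins $p_0$ to $p_1$.

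With this in hand the corollary is a one-line deduction. Since $\calR$ is connected and $ds_2^2$ is a smooth Riemannian metric on it, Hopf--Rinow applies, so geodesic completeness would entail that every pair of points is joined by a minimizing geodesic. As $p_0$ and $p_1$ are joined by no geodesic at all, a fortiori by no minimizing one, completeness must fail.

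I would, however, prefer a second route that is completely self-contained and sidesteps the non-joining computation. From the geodesic classification, the constant-$t$ rays are geodesics, so $\gamma(s)=(r_0-s,\,t_0)$ is a unit-speed geodesic; as $s\to r_0^-$ we have $r(s)\to 0^+$, which lies on the boundary of $\calR$ where the coefficient $1/r^2$ of $ds_2^2$ degenerates. Thus $\gamma$ is inextendible and its maximal interval of definition is the bounded-above interval $(-\infty,r_0)$, which already exhibits geodesic incompleteness with no appeal to Hopf--Rinow. The only delicate point in the first approach---and the main obstacle I would watch for---is the completeness of the geodesic classification: one must be certain that the families $f^2=1$ and $f^2\neq 1$ exhaust all geodesics emanating from $p_0$, since otherwise ruling them out does not prove that $p_0$ and $p_1$ are unjoined. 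The second approach avoids this issue entirely, which is why I regard it as the cleaner proof.
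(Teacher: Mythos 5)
Your first route is essentially the paper's own argument: the paragraph preceding the corollary establishes that $(r_0,t_0)$ and $(r_0,t_1)$ with $0<|t_1-t_0|<\pi r_0$ are joined by no geodesic, and incompleteness follows via Hopf--Rinow exactly as for $(\calR,ds_1^2)$. Your second route is genuinely different and, as you suspect, cleaner. The horizontal rays are geodesics for every warped metric $g_h$ (the paper computes $\nabla^h_{\partial_r}\partial_r=0$), so $\gamma(s)=(r_0-s,t_0)$ is a unit-speed geodesic whose maximal domain $(-\infty,r_0)$ is a proper subinterval of $\R$, since $\gamma(s)\to(0,t_0)\notin\calR$ as $s\to r_0^-$; this exhibits incompleteness directly, with no appeal to Hopf--Rinow and no need to classify the curved geodesics. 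It is in fact the analogue of the ``inextendible horizontal ray of length $1$'' observation the paper makes for $ds_1^2$ but does not repeat for $ds_2^2$. The caveat you raise about route 1 (exhaustiveness of the classification, and ruling out the $f^2=1$ geodesics, which the paper does not do explicitly) is well taken, and there is a further one: in solving $r_1=r_0=(1/b)\sin(\pm bs+\arcsin(br_0))$ the paper keeps only the solutions $bs=2k\pi$ and discards the branch $\pm bs=\pi-2\arcsin(br_0)+2k\pi$ coming from $\sin\theta=\sin(\pi-\theta)$, so the non-joinability claim as stated needs more care. Your second route is immune to all of this and is the proof I would keep.
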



\begin{thebibliography}{10}

\bibitem{P}
P.~Petersen.
\newblock Riemannian Geometry.
\newblock {Grad. Text. in Math., (171) Springer Sc+Bus. Med. LLC, 2006.}

\end{thebibliography}
\end{document}